\theoremstyle{plain}
\newtheorem{theorem}[subsection]{Theorem}
\newtheorem{proposition}[subsection]{Proposition}
\newtheorem{lemma}[subsection]{Lemma}
\newtheorem{corollary}[subsection]{Corollary}
\theoremstyle{definition}
\newtheorem{definition}[subsubsection]{Definition}
\newtheorem{example}[subsubsection]{Example}
\newtheorem{remark}[subsection]{Remark}
\title{Classification of Frobenius Forms  \\ in five variables}
\author{Zhibek Kadyrsizova}
\address{Department of Mathematics, Nazarbayev University, Nur-Sultan, 010000, Kazakhstan}
\email{zhibek.kadyrsizova@nu.edu.kz}
\author{Janet Page} 
\address{Department of Mathematics, University of Michigan, Ann Arbor, MI 48109-1043, USA}
\email{jrpage@umich.edu}
\author{Jyoti Singh}
\address{Department of Mathematics,  Visvesvaraya National Institute of Technology,  Nagpur, Maharashtra 440010, India}
\email{jyotijagrati@gmail.com}
\author{Karen E. Smith}
\address{Department of Mathematics, University of Michigan, Ann Arbor, MI 48109-1043, USA}
\email{kesmith@umich.edu}
\author{Adela Vraciu}
\address{Department of Mathematics, University of South Carolina, Columbia, SC 29208, USA}
\email{vraciu@math.sc.edu}
\author{Emily E. Witt}
\address{Department of Mathematics, University of Kansas, Lawrence, KS 66045-7594, USA}
\email{witt@ku.edu}
\thanks{Partial support was provided  by  FDCRGP Grant 021220CRP5141 (for Zhibek Kadyrsizova), SERB(DST) Grant ECR/2017/000963 (for Jyoti Singh), 
NSF Grants \#1801697, \#1952399, and \#2101075
 (for Karen Smith),  and NSF CAREER Award \#1945611 (for Emily Witt),  as well as NSF Conference Grant DMS \#1934391 and AWM ADVANCE Grant NSF-HRD \#1500481}
\begin{document}

\begin{abstract}
We classify \emph{Frobenius forms}, a special class of homogeneous polynomials in characteristic $p>0$, in up to five variables over an algebraically closed field. We also point out some of the similarities with quadratic forms. 
\end{abstract}

\maketitle

\section{Introduction}

Fix a field $k$ of prime characteristic $p$.   
A Frobenius form over $k$ is a 
homogeneous  polynomial  in indeterminates $x_1, \dots, x_n$ of the form 
\begin{equation}\label{FF}
 x_1^{p^e} L_1 + x_2^{p^e}L_2 + \dots + x_n^{p^e}L_n
\end{equation}
where  each $L_i$ is some linear form and $e$ is a positive  integer.  
Put differently,  a Frobenius form is a homogeneous polynomial of degree $p^e+1$ that is  in the ideal generated by $x_1^{p^e}, \dots, x_n^{p^e}$.

 Frobenius forms can  be compared to quadratic forms:  if we allow $e=0$ in the expression (\ref{FF}) above, we get a quadratic form. 
Quadratic forms are  well-studied in the classical literature.  For example, much is known about the geometry of quadric hypersurfaces, 
and, at least over a quadratically closed field, their classification up to linear changes of coordinates is well-known. Both admit a convenient matrix factorization, where the action of changing coordinates behaves similarly.

One special but very interesting case of Frobenius forms are those of degree three (which are necessarily defined over a field of characteristic two). 
A smooth cubic surface  is always  Frobenius split, it turns out,  {\it unless} it is defined by a Frobenius form---in particular, non-Frobenius split (smooth)  cubic surfaces exist only over fields of characteristic two  \cite[5.5]{hara.rational-singularities}. A  detailed examination of non-Frobenius split cubic surfaces of characteristic two, including the non-smooth ones,   was undertaken in \cite{cubicspaper}.  
In particular, the Frobenius forms of degree three in up to four variables are classified there, up to projective equivalence.
 In this paper, we extend that classification to  arbitrary Frobenius forms in up to five variables. Put differently,  we classify the projective equivalence classes of   three-dimensional projective hypersurfaces defined by  Frobenius forms---a class called {\it extremal three-folds} in \cite{extremal-v1}. Section \ref{classification} describes this classification in detail,   including a particularly  "sparse" equation representing each of the seven types of projective equivalence classes of extremal three-folds.

Frobenius forms and the projective schemes they define have various  "extreme" properties, both algebraically and geometrically.
 For example,  cubic surfaces defined by  Frobenius forms are characterized by the geometric property that 
they "contain no triangles"---that is, any  plane section consisting of three lines {\it must} contain a point on all three \cite[5.1]{cubicspaper}.
Analogous  extremal  configurations of linear subvarieties occur more generally for extremal hypersurfaces of higher degree and dimension; see \cite[\S 8]{extremal-v1}. Algebraically, 
  reduced Frobenius forms can be characterized  as those  achieving the minimal possible $F$-pure threshold among  reduced forms of the same degree
 \cite[1.1]{extremal-v1}. 
 
 \medskip
 \noindent
  {\sc Acknowledgements.} This paper grew out of discussions begun at the AWM-sponsored workshop "Women in Commutative Algebra" at the Banff International Research Station in October 2019. We are grateful to Elo\'isa Grifo and Jennifer Kenkel, who also participated in those discussions.

 \section{Matrix Factorization of Frobenius Forms} \label{linear-algebra}
 
 Fix a field $k$ of prime characteristic $p$. Let $q$ denote an arbitrary positive power of $p$.
 
 \smallskip

The beauty of Frobenius forms is that, like quadratic forms,  they admit a matrix factorization:  a Frobenius  form $h$ in $n$ variables can be written as 
\begin{equation*}
h=  \begin{bmatrix}   
 x_1^{p^e} &  x_2^{p^e}  & \cdots & x_n^{p^e}
 \end{bmatrix}
 A 
\begin{bmatrix}  x_1 \\ x_2 \\ \vdots  \\ x_n 
\end{bmatrix}
 \end{equation*}
 for 
some  $n \times n$  matrix $A$ with entries in $k$. Because $e>0$, the matrix $A$ representing the Frobenius form $h$ is {\it unique}.

When $e = 0$, we recover the case of quadratic forms. In the quadratic form case, of course, $A$ is not unique, but we can force uniqueness, for example,  by insisting that $A$ be symmetric (when $p\neq 2$).  

There is an interesting story of Frobenius forms with "conjugate symmetric" matrices,    where conjugacy is defined using a Frobenius automorphism. 
By definition, a {\it Hermitian Frobenius  form} over  $k$ is a Frobenius form whose matrix $A$ satisfies $A^{\top} = A^{[q]}$ for some $q=p^e$ (in particular, such a matrix is defined over $\mathbb F_{q^2}$). This special class of Frobenius forms gives a characteristic $p$ analog of  Hermitian forms over the complex numbers, and has been studied, for example, in 
  \cite{segre}, \cite{BC},  \cite{HommaKim} and  \cite[\S35]{kollar14}. Like quadratic forms over quadratically closed fields, there is exactly one non-degenerate Hermitian Frobenius form, up to projective equivalence, in each dimension \cite[4.1]{BC} over $\mathbb F_{q^2}$. As we will see, the classification of more general Frobenius forms is more complicated.

To classify Frobenius forms up to projective equivalence, we need to understand how linear changes of coordinates act on them.

Let $g \in  GL_n(k)$ be any linear change of coordinates for the polynomial ring $k[x_1, \dots, x_n]$.
We represent the action of $g$ on the variables as 
 $$\begin{bmatrix}  x_1 \\ x_2 \\ \vdots  \\ x_n\end{bmatrix} \mapsto g  \begin{bmatrix}  x_1 \\ x_2 \\ \vdots  \\ x_n\end{bmatrix},$$
 the usual matrix multiplication. In particular, when $g$ acts\footnote{We write "$g  \cdot h$" to indicate the group action on polynomials, whereas adjacency, "$gB$," will indicate the usual matrix multiplication on an $n\times m$ matrix $B$.} on the Frobenius form $h$, we have
  \begin{align*}
g\cdot h &=  g \cdot \left( \begin{bmatrix}  x_1^{p^e} &  x_2^{p^e}  & \cdots & x_n^{p^e}\end{bmatrix}
 A \begin{bmatrix}  x_1 \\ x_2 \\ \vdots  \\ x_n\end{bmatrix} \right) 
&= \begin{bmatrix}  x_1^{p^e} &  x_2^{p^e}  & \cdots & x_n^{p^e}\end{bmatrix}(g^{[p^e]})^{\top} A g\begin{bmatrix}  x_1 \\ x_2 \\ \vdots  \\ x_n\end{bmatrix}, 
 \end{align*}
 where $g^{[p^e]}$ denotes the matrix whose entries are the $p^e$-th powers of the entries of $g$ and $B^\top$ indicates the transpose of a matrix $B$. 
 Thus, the action of $g$ on the  Frobenius form $h$ transforms the matrix $A$ representing  $h$ into the matrix  $(g^{[p^e]})^{\top} A g.$

A Frobenius form is said to be {\bf nondegenerate} if it cannot be written, after a linear change of coordinates, in a smaller number of variables, and is otherwise {\bf degenerate}, similarly as for quadratic forms. We define its {\bf embedding dimension} to be the smallest number of variables needed to write $f$ up to linear change of coordinates.  

The {\bf rank}  of a Frobenius form is defined as the rank of the representing matrix, just as the rank of  a quadratic form is the rank of the corresponding symmetric matrix when $p\neq 2$.  The rank of a Frobenius form is invariant under changes of coordinates, since the rank of a matrix is unchanged by  multiplication on the left and right by invertible matrices.

The rank of a Frobenius form is equal to the codimension of the singular locus of the corresponding hypersurface \cite[5.3]{extremal-v1}.  This is analogous to the
 corresponding statement about quadratic forms of non-even characteristic, when we work with the corresponding symmetric matrix.

 There is precisely  one full rank Frobenius form, up to change of coordinates, over an algebraically closed field, in each fixed degree $p^e+1$ and embedding dimension $n$:
 
\begin{theorem}\label{fullrank}  \cite[6.1]{extremal-v1}\cite{Beauville}
Every  full rank Frobenius form over an algebraically closed field $k$ of characteristic $p>0$ 
 is represented, in suitable linear coordinates,  by the   diagonal form  $x_1^{q+1}+ \cdots + x_n^{q+1},$ where 
$q$ is some power of $p$. 
\end{theorem}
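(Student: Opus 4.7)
The plan is to reduce the theorem to the known classification of Hermitian Frobenius forms via a Lang-type surjectivity. Write $q = p^e$. A full-rank Frobenius form corresponds to an invertible matrix $A \in GL_n(k)$, and the target form $\sum_i x_i^{q+1}$ corresponds to the identity matrix. The paper already notes that every non-degenerate \emph{Hermitian} Frobenius form---one whose matrix $M$ satisfies $M = M^{[q]\top}$---is automatically defined over $\mathbb{F}_{q^2}$, and is equivalent to $\sum_i x_i^{q+1}$ by \cite[4.1]{BC}. Hence it suffices to show that every full-rank Frobenius form over $k$ is projectively equivalent to a Hermitian one.

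Given $A \in GL_n(k)$, I would first compute what it means for $M := (g^{[q]})^\top A g$ to satisfy the Hermitian condition $M = M^{[q]\top}$. Using that entrywise $q$-th power commutes with transpose, this condition simplifies---after canceling the invertible outer factor $(g^{[q]})^\top$---to
\[
g^{[q^2]} \, g^{-1} \;=\; (A^{[q]\top})^{-1} A.
\]
So the desired $g$ exists exactly when the element $B := (A^{[q]\top})^{-1} A$ of $GL_n(k)$ lies in the image of the Lang map $g \mapsto F_{q^2}(g)\, g^{-1}$, where $F_{q^2}(g) := g^{[q^2]}$ is the $\mathbb{F}_{q^2}$-Frobenius endomorphism of $GL_n$.

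This surjectivity is precisely Lang's theorem for $GL_n$: since $GL_n$ is a connected algebraic group over $k$ and $F_{q^2}$ is an algebraic endomorphism whose fixed-point set $GL_n(\mathbb{F}_{q^2})$ is finite, the orbit of the identity under the action $g \ast x = g\, x\, F_{q^2}(g)^{-1}$ has maximal dimension $n^2$, is open in $GL_n$, and---by connectedness---is all of $GL_n$. Applied to the matrix $B$ above, this yields the $g$ bringing $A$ to a Hermitian matrix, after which \cite[4.1]{BC} finishes the argument.

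I expect the main obstacle to be the Lang surjectivity itself, which is the geometric heart of the argument and where algebraic closedness of $k$ crucially enters. The reduction of the Hermitian condition to a Lang equation is a direct matrix manipulation, and the remaining classification of non-degenerate Hermitian forms is quoted from \cite[4.1]{BC}; the real work is the fact that every twisted conjugacy class for $F_{q^2}$ on $GL_n$ is trivial, which is what makes the full-rank classification so clean.
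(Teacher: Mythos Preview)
The paper itself does not prove this theorem; it is quoted from \cite[6.1]{extremal-v1} and \cite{Beauville}, so there is no in-paper argument to compare against directly. That said, your proof is correct, and the Lang--theorem route is in fact the approach taken in the Beauville reference the paper cites.

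Your matrix computation checks out: with $M=(g^{[q]})^\top A g$, the condition $M=M^{[q]\top}$ unwinds (after cancelling the invertible left factor $(g^{[q]})^\top$) to $Ag=A^{[q]\top}g^{[q^2]}$, i.e.\ $g^{[q^2]}g^{-1}=(A^{[q]\top})^{-1}A$. Lang--Steinberg applies to the connected group $GL_n$ over the algebraically closed field $k$ with the endomorphism $F_{q^2}\colon g\mapsto g^{[q^2]}$, which is surjective (since $k$ is perfect) and has finite fixed-point group $GL_n(\mathbb F_{q^2})$; hence the required $g$ exists. The final appeal to \cite[4.1]{BC} is legitimate: a Hermitian matrix automatically has entries in $\mathbb F_{q^2}\subset k$, and the classification there furnishes a coordinate change over $\mathbb F_{q^2}$ (hence over $k$) taking $M$ to the identity.

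One expository point: your one-sentence sketch of Lang's theorem only shows that the orbit of the identity under the twisted action is open. To conclude it is all of $GL_n$ you need that \emph{every} orbit is open---equivalently, that the Lang map is \'etale everywhere, which follows from $dF_{q^2}=0$---so that the orbits form an open partition of a connected space and there is only one. Since you are invoking a standard theorem anyway, this is a matter of presentation rather than a genuine gap.
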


 Put differently, every smooth projective hypersurface defined by a Frobenius form is projectively equivalent to one defined by $x_1^{q+1} + x_2^{q+1} + \dots + x_n^{q+1}. $ This is analogous to the situation for quadratic forms over a quadratically closed field of  characteristic not two, though the proof is a bit more involved. In particular, we do not have  a complete understanding of the situation over non-closed fields, an interesting open problem.  We have no counterexample to the speculation that the full rank Frobenius forms may be "diagonalizable" over a perfect field closed under all degree $p^e$ extensions. 

In the non-full rank case, there are finitely many projective equivalence classes of Frobenius forms in  each fixed  degree and embedding dimension  \cite[7.4]{extremal-v1}. Indeed,  the number of non-degenerate Frobenius forms of embedding dimension $n$  (of fixed degree) is bounded above by the $n$-th Fibonacci number.  However, the paper  \cite{extremal-v1} stopped short of precisely classifying the Frobenius forms in each dimension, a task essentially  completed  for Frobenius forms in four variables in \cite{cubicspaper}. 
The case of 
five variables is treated here in Section \ref{new}. For the statement, see Section \ref{classification}.

\section{Quadratic Forms}

To complete our story, we  recall the classification of quadratic forms:   
\begin{proposition}{\label{PropQuad}} 
Let $f$ be a non-degenerate  quadratic form in $n$ variables over a  quadratically  closed{\footnote{By quadratically closed, we mean that every degree two polynomial over $k$ splits. In characteristic two, this is a stronger assumption than requiring that the field contain the square root of every element.}}  field.
If the characteristic of $k$ is  two, then  $f$ is projectively equivalent to either
\begin{enumerate}
\item 
 $x_1x_2 + x_3x_4+ \dots + x_{n-2}x_{n-1}+ x^2_n$ if $n$ is odd, or 
\item   $x_1x_2 + x_3x_4+ \dots + x_{n-3}x_{n-2}+ x_{n-1}x_n$ if $n$ is even.
\end{enumerate}
If the characteristic of $k$ is not two, then   $f$   is projectively equivalent to  $$x_1^2+x_2^2+ \dots + x_n^2.$$
In particular, over a quadratically closed field, there is exactly one quadratic form in each embedding dimension.
\end{proposition}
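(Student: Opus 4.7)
The plan is to proceed by induction on $n$, extracting one hyperbolic pair (or one square) at each step. In characteristic not two, write $f = v^\top A v$ for the unique symmetric matrix $A$ representing $f$. A standard Gram–Schmidt argument diagonalizes $A$: if some $a_{ii} \neq 0$, complete the square to split off $x_i$ and recurse on the complementary subspace; if all $a_{ii} = 0$ then some $a_{ij} \neq 0$, and replacing $x_i$ by $x_i + x_j$ produces a nonzero diagonal entry. This reduces $f$ to $\sum c_i x_i^2$, and quadratic closure lets us absorb each $\sqrt{c_i}$ into $x_i$.

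In characteristic two, work with the polarization $B(x,y) := f(x+y) - f(x) - f(y)$, which is alternating since $B(v,v) = 0$. If $B \equiv 0$, then $f$ has no cross terms, so $f = \sum c_i x_i^2 = \bigl(\sum \sqrt{c_i}\, x_i\bigr)^2$; this depends on a single linear form, and non-degeneracy forces $n = 1$ with $f = x_1^2$. Otherwise, rescale to find $v_1, v_2$ with $B(v_1, v_2) = 1$; since $B$ is non-degenerate on $W := \mathrm{span}(v_1, v_2)$, we obtain $V = W \oplus W^{\perp_B}$. Coordinates adapted to this splitting kill the cross terms $x_1 x_j$ and $x_2 x_j$ for $j \geq 3$, leaving
\[ f = x_1 x_2 + a x_1^2 + b x_2^2 + g(x_3, \dots, x_n). \]

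The key technical step is to eliminate $a$ and $b$. Substituting $x_2 \mapsto x_2 + c x_1$ leaves $g$ untouched and changes the coefficient of $x_1^2$ from $a$ to $a + c + bc^2$ (using $(x_2 + c x_1)^2 = x_2^2 + c^2 x_1^2$ in characteristic two). Quadratic closure provides a root of $bc^2 + c + a$ in $k$ (or just take $c = a$ if $b = 0$), which kills $a$; a subsequent substitution of the form $x_1 \mapsto x_1 + d x_2$ then eliminates the remaining $x_2^2$ term while preserving the vanishing of the $x_1^2$ coefficient. Now $f = x_1 x_2 + g$, and $g$ must be non-degenerate in $n - 2$ variables (otherwise $f$ would reduce to fewer variables), so induction handles the even case. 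In the odd case the recursion terminates at a single variable with $f = c x_n^2$, $c \neq 0$; quadratic closure writes $c = \alpha^2$, and substituting $x_n \mapsto x_n/\alpha$ yields $x_n^2$.

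The main obstacle is the characteristic two bookkeeping: verifying that non-degeneracy is preserved under the inductive reductions, and recognizing that one genuinely needs the stronger form of quadratic closure---splitting of arbitrary quadratic polynomials, not merely extraction of square roots---in order to solve $bc^2 + c + a = 0$ when $b \neq 0$. This is exactly the point where the footnoted distinction in the statement is essential.
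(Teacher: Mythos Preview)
Your argument is correct, but it takes a different route from the paper's. You work with the polarization $B(x,y)=f(x+y)-f(x)-f(y)$ and use the orthogonal splitting $V=W\oplus W^{\perp_B}$ to isolate a hyperbolic plane, then invoke quadratic closure explicitly to solve $bc^2+c+a=0$ and kill the diagonal terms $ax_1^2+bx_2^2$. The paper deliberately avoids the bilinear-form machinery: it works directly with the polynomial, picks a cross term $x_1x_2$, and eliminates the unwanted $x_1x_j$ and $x_2x_j$ terms ($j\ge 3$) by a sequence of explicit substitutions, arriving at $L^2+x_1x_2+h_4(x_3,\dots,x_n)$; the residual binary piece $L^2+x_1x_2$ is then handled by the $n=2$ base case (factoring over a quadratically closed field). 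Your approach is the standard structural one and makes transparent exactly where the stronger form of quadratic closure enters; the paper's approach is more hands-on and was written precisely to give a short self-contained argument that does not invoke the polarization or orthogonal complements.
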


  This classification is well known, but since we  could not find a low-tech proof in the modern literature for the case $p=2$, we include 
 one here.  Alternate discussions using more machinery  can be found, for example, in \cite[I.16]{Dieudonne},  \cite[12.9]{Grove} or  \cite[7.32]{EKM}, which also contain more refined classifications over non-closed fields.  A lower-tech proof can be found in the classic book of \cite{Dickson}, 
 but for the convenience of the reader we include a direct and concise proof here.{\footnote{While the stated classification of quadratic forms in characteristic two  is well known, it appears that an elementary proof is not. In his history of quadratic forms \cite{scharlau}, Scharlau laments that Dickson's 1899 work \cite{dickson-1899}, which is elementary but "rather involved,"  is not better known, stating   "However, one must admit, that this paper  - like  most of Dickson's work - is not very pleasant to read. It is  entirely algebraic." We hope the reader will find our straightforward and entirely algebraic proof more pleasant to read.}}

\begin{proof} The only quadratic form in one variable is $x_1^2$. Likewise, the two-variable case is trivial:  a degree two form in two variables must factor into two linear forms over a quadratically closed field, so in suitable coordinates,  the form is  either $x_1x_2$  or $x_1^2$ (which is degenerate).

\subsection*{Case of characteristic not two.} It is straightforward to check (even without closure assumptions on $k$) that
 a suitable choice of  linear change of coordinates  puts  $f$   in the form  $ \lambda_1x_1^2+\cdots+ \lambda_nx_n^2$, where  the  $\lambda_i$ are nonzero (e.g., see \cite{lam-quadratic-forms}).  So, if the ground field  is quadratically closed, 
 the change of coordinates taking each $x_i \mapsto \frac{1}{\sqrt \lambda_i} x_i$  normalizes the form to  $x_1^2+\cdots+x_n^2$. 

 \subsection*{Case of characteristic two.} 
Say that  $n\geq 3$. Since $f$ is non-degenerate, it is not the square of a linear form.
Thus some square-free term, which we can assume to be  $x_1x_2$,  appears with nonzero coefficient. Scaling, we may assume the coefficient of $x_1x_2$  is 1.
 
Now write $f$ in the form
\begin{equation}\label{temp1}
L^2 +  x_1 x_2 + \sum_{j=3}^n a_{1j} x_1 x_j + 
\sum_{j=3}^n a_{2j} x_2 x_j + h_1(x_3, \ldots, x_n).
\end{equation}
where $L$ is a (possibly zero) linear form in  $x_1, x_2$, and $h_1$ is a quadratic form in  $x_3, \ldots, x_n$.
Apply the linear change of coordinates sending $x_2$ to $x_2 + \sum_{j=3}^n {a_{1j}}x_j$, fixing the other variables.  This transforms 
(\ref{temp1}) into an expression which can be written  
\begin{equation}\label{eq2}
 L^2+ x_1 x_2 + \sum_{j=3}^n a'_{2j} x_2 x_j + h_2(x_3, \ldots, x_n),
\end{equation}
where again $h_2$ is a quadratic form in  $x_3, \ldots, x_n$.

With another  change of coordinates, we may assume  the summand $\sum_{j=3}^n a'_{2j} x_2 x_j$ is zero. 
Indeed, if some $a_{2j}'$ is nonzero, then we can assume $a_{23}'=1$ after renumbering and scaling if necessary.
Now  apply the linear transformation sending $x_3 $ to $ x_3 + \sum_{j=4}^n {a'_{2j}} x_j$, fixing the other variables.
This transforms the expression (\ref{eq2})  into 
\begin{equation}\label{quad-form3}
L^2 + (x_1 + x_3) x_2  + h_3(x_3, \ldots, x_n),
\end{equation}
where $h_3$ is quadratic  in  $x_3, \ldots, x_n$. Next, the coordinate change  taking $x_1$ to $x_1+x_3$, fixing the other variables, transforms (\ref{quad-form3}) into the  form
\begin{equation}\label{quad-form4}
 L^2 + x_1x_2  + h_4(x_3, \ldots, x_n)  
\end{equation}
with  $h_4$ quadratic.

Finally, by induction, we  separately apply linear changes of coordinates to  the quadratic $L^2 + x_1x_2$ in  $\{x_1, x_2\}$ and the quadratic $h_4$ in $\{x_3, \dots, x_n\}$ to put each into the desired form. It is  easy to see, then, that their sum has the desired form as well,  depending  on the parity of $n$ in the stated way. 
This completes the proof.
\end{proof}

\begin{remark}
Our  proof easily adapts to  show the well-known basic fact that a quadratic form over an arbitrary field of characteristic two is a sum of binary quadratics in distinct variables (plus a quadratic in one variable if the embedding dimension is odd). Alternatively, our proof adapts to prove Theorem 199 in \cite{Dickson} over any perfect field of characteristic two.
\end{remark}

\begin{remark}
Quadratic forms behave like Frobenius forms from the point of view of achieving the minimal $F$-pure threshold. For a reduced form of degree $d$, it is proved in \cite[1.1]{extremal-v1} that the $F$-pure threshold is at least $\frac{1}{d-1}$,  with equality if and only if the form is a Frobenius or quadratic form. Another way in which Frobenius forms and quadratic forms are similar is that the corresponding hypersurfaces both contain many high-dimensional linear subvarieties; see \cite[\S 8]{extremal-v1}.

\end{remark}

\section{Frobenius Forms in Five Variables: Classification Statements}\label{classification}

Fix an algebraically closed  field $k$ of positive characteristic $p$. Let $q$ be an arbitrary positive power of $p$.

\begin{theorem} \label{5} 
There are seven projective equivalence classes   of Frobenius forms of a fixed degree  $q+1$ and embedding dimension five. 
Specifically, these are represented by the following forms: 
\begin{enumerate}
 \item \, $\,x_1^{q+1}+x_2^{q+1}+x_3^{q+1}+x_4^{q+1}+x_5^{q+1}$\, (rank 5)
 \item \, $\, x_1^{q}x_5 +x_2^{q}x_4 + x_3^{q+1} + x_4^q x_2  $\, (rank 4)
 \item \, $ \, x_1^{q}x_5 +x_2^{q}x_4 + x_3^{q+1} + x_4^q x_1  $ \, (rank 4)
 \item \,  $\,  x_1^{q}x_5 +x_2^{q}x_4 + x_3^{q}x_2 + x_4^q x_1  $  \, (rank 4)
 \item \,  $\, x_1^{q}x_5 +x_2^{q}x_3 + x_3^{q}x_2 + x_4^q x_1  $  \, (rank 4)
 \item \,  $ \, x_1^{q}x_5 +x_2^{q}x_4 + x_3^{q+1}  $ \,  (rank 3)
 \item \,  $\, x_1^{q}x_5 +x_2^{q}x_4 + x_3^{q}x_2 $\,   (rank 3) 
\end{enumerate}
\end{theorem}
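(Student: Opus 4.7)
The approach is case analysis on the rank $r$ of the representing matrix $A$, which is invariant under the action $A \mapsto (g^{[q]})^\top A g$. Rank $5$ is dispatched immediately by Theorem~\ref{fullrank}, giving form (1). A preliminary check rules out the low-rank cases for embedding dimension $5$: a rank $1$ matrix gives $h = M^q N$ for linear forms $M, N$, hence embedding dimension at most $2$; a rank $2$ matrix gives $h = M_1^q N_1 + M_2^q N_2$, so embedding dimension at most $4$. The substance of the proof is therefore in ranks $3$ and $4$.

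For each of these two ranks, my plan is to first reduce $A$ to a standard kernel-adapted shape. Right multiplication by a suitable $g$ moves $\ker A$ onto $\mathrm{span}(e_{r+1}, \dots, e_5)$, zeroing out the last $5-r$ columns of $A$. Because $k$ is algebraically closed, the map $g \mapsto g^{[q]}$ is surjective on $GL_5(k)$, so I can analogously position the left kernel of $A$ (which transforms through $(g^{[q]})^\top$) to a convenient location; for rank $4$, arrange it to be $\mathrm{span}(e_5)$, yielding a zero bottom row (as in forms (2)--(5)). After these two kernel normalizations, $A$ is supported in an $r \times r$ block with specified ambient zeros and $h$ reads $\sum x_i^q L_i$ with each $L_i$ linear in a prescribed smaller set of variables. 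I would then iteratively simplify the block using the stabilizer of the kernel-normalization: twisted column operations followed by twisted row operations peel off one variable at a time, reducing $A$ to a sparse matrix with exactly $r$ nonzero entries, each equal to $1$.

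The distinct orbits are indexed by invariants of the directed graph $\Gamma_A$ on vertex set $\{1, \ldots, 5\}$ with an edge $i \to j$ whenever $A_{ij} \neq 0$. Under the twisted equivalence, the unlabelled isomorphism type of $\Gamma_A$ --- its decomposition into self-loops (Fermat summands $x_i^{q+1}$), $2$-cycles (Hermitian pairs of the form $x_i^q x_j + x_j^q x_i$), longer directed chains, and isolated vertices --- is preserved. Enumerating such digraphs with $r$ edges compatible with embedding dimension $5$ yields precisely the patterns of forms (2)--(7). Pairwise inequivalence would then be proved by exhibiting explicit invariants: self-loops detected via common eigenvectors of $A$ and $A^\top$; $2$-cycles detected via the rank of an appropriate symmetrizer built from $A$ and $(A^\top)^{[q]}$; and chain lengths detected via iterated images and kernels.

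The main obstacle will be executing the reduction step rigorously. Because the column change $g$ and the row change $g^{[q]}$ are linked by the same matrix $g$, independent Gauss--Jordan reduction is not available, and the Frobenius twist obstructs simple ``add $\lambda$ times row $i$ to row $j$'' manipulations --- one must verify that a single $g$ simultaneously produces the desired column and row operations. The proof therefore has to proceed by induction on the variables, at each stage using the residual stabilizer to kill one entry while tracking how the twist propagates to the remaining entries. The rank $4$ case is richest: four inequivalent normal forms must be separated from one another, and confirming that the reduction genuinely lands in exactly one of them, without missing or duplicating an orbit, is where I expect the bulk of the technical work to lie.
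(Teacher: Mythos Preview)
Your overall architecture---split by rank, dispatch rank~$5$ by Theorem~\ref{fullrank}, rule out ranks~$\le 2$, reduce ranks~$3$ and~$4$ to a sparse normal form, then separate the sparse forms---matches the paper's. The reduction to sparse form is exactly Theorem~\ref{sparse}, which the paper simply cites; your worry that ``the main obstacle will be executing the reduction step rigorously'' is therefore misplaced. The genuinely new work, both in the paper and in any proof, is the \emph{separation} of the sparse forms, and here your plan diverges from the paper and runs into trouble.

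You propose to distinguish sparse forms by matrix-theoretic invariants of $A$: common eigenvectors of $A$ and $A^\top$, the rank of a symmetrizer built from $A$ and $(A^\top)^{[q]}$, and dimensions of iterated images and kernels. The problem is that none of these are invariants of the twisted action $A \mapsto (g^{[q]})^\top A g$. Under this action $\ker A$ transforms by $g^{-1}$ while $\ker A^\top$ transforms by $(g^{[q]})^{-1}$; since $g \neq g^{[q]}$ for generic $g$ over the algebraic closure, the intersection $\ker A \cap \ker A^\top$ has no reason to keep its dimension, and an eigenvector $v$ of $A$ is carried to $g^{-1}v$, which is not an eigenvector of the new matrix. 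The same Frobenius mismatch wrecks any construction that pairs $A$ with $A^\top$ or $A^{[q]}$ without an intervening twist. So while your observation that the seven listed forms have seven distinct unlabelled digraphs is correct, your proposed mechanism for proving that the digraph type is an invariant does not work as stated.

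The paper instead uses \emph{geometric} invariants, which are automatically coordinate-free. For rank $r = n-1$ it observes that the singular locus of any sparse form is $V(x_1,\dots,x_{n-1})$, so any projective equivalence must stabilize $\langle x_1,\dots,x_{n-1}\rangle$; tracking the coefficient of $x_n$ then forces $x_1 \mapsto \lambda x_1$, and modding out by $x_1$ reduces to the classification in $n-2$ variables (giving $N(n,n-1) = N(n-2,n-3)+2$ and hence four classes when $n=5$). For rank~$3$ it separates forms~(6) and~(7) by an ideal-membership invariant: $g = x_1^q x_5 + x_2^q x_4 + x_3^q x_2$ lies in a height-two linear ideal $\langle x_1,x_2\rangle$, whereas $f = x_1^q x_5 + x_2^q x_4 + x_3^{q+1}$ lies in no ideal $\langle L_1,L_2\rangle$ with $L_1,L_2$ in the linear span of the Jacobian. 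If you want to rescue your digraph approach, you would need invariants of this geometric kind---linear subspaces contained in the hypersurface, the stratification by iterated singular loci, and the like---rather than eigendata of $A$.
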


For completeness, we also describe  degenerate Frobenius forms  in  five variables in the following two theorems. The proofs are the same as in \cite{cubicspaper}, although that source considered only cubic Frobenius forms.

\begin{theorem} \label{4} 
There are five projective equivalence classes   of non-degenerate Frobenius forms of a fixed degree $q+1$  and embedding dimension four. 
 Specifically, these are represented by the following forms: 
 \begin{enumerate}
 \item \, $\, x_1^{q+1}+x_2^{q+1}+x_3^{q+1}+x_4^{q+1}$\, (rank 4)
 \item \, $\, x_1^q x_4+x_2^{q+1}+x_3^qx_1 $ \, (rank 3)
 \item \, $\, x_1^q x_4+ x_2^q x_3+ x_3^qx_1$ \, (rank 3)
 \item \, $\, x_1^q x_4+x_2^qx_3+x_3^qx_2$ \, (rank 3)
 \item \, $\, x_1^q x_4+x_2^q x_3$ \, (rank 2)
\end{enumerate}
\end{theorem}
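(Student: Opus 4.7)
Following the matrix-factorization setup of Section \ref{linear-algebra}, classifying Frobenius forms of degree $q+1$ in four variables up to projective equivalence is equivalent to classifying $4 \times 4$ matrices $A$ over $k$ under the twisted congruence action $A \sim (g^{[q]})^{\top} A \, g$ of $g \in GL_4(k)$. The rank of $A$ is invariant under this action, so the proof proceeds by cases on rank. A rank-$1$ matrix factors as $A = u v^{\top}$, and the corresponding form $h = (u^{\top}\mathbf{x}^{[q]})(v^{\top}\mathbf{x})$ equals $L^q M$ for some linear forms $L, M$ (using that $k$ is algebraically closed to extract $q$-th roots); this has embedding dimension at most two, so non-degeneracy in four variables forces the rank of $A$ to be $2$, $3$, or $4$. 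The full-rank case is precisely Theorem~\ref{fullrank} and yields representative (1).

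The main work is the rank-$3$ analysis, in which both $\ker(A)$ and $\ker(A^{\top})$ are one-dimensional. I would first use $g$ to place these two kernels along chosen coordinate axes, reducing $A$ to a matrix with one zero row and one nearly-zero column. The central subtlety---and the reason the classification is richer than for quadratic forms---is that column operations enter through $g$ while row operations enter through $g^{[q]}$, so the symmetric diagonalization that works for quadratic forms cannot be performed. Continuing to clear entries with further twisted-congruence moves, the possible obstructions depend on how the right and left kernels sit with respect to the induced pairing, and a case analysis produces exactly the three normal forms (2), (3), (4).

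The rank-$2$ case is shorter. After bringing $\ker(A)$ and the Frobenius-pulled-back $\ker(A^{\top})$ into complementary coordinate positions---which is possible precisely when the form is non-degenerate in four variables---$A$ acquires a single invertible $2 \times 2$ block $B$ with zeros elsewhere. On block-diagonal $g = \mathrm{diag}(g_1, g_2)$, the action on $B$ decouples as $B \mapsto (g_1^{[q]})^{\top} B g_2$ with $g_1, g_2$ independently chosen, so twisted Gaussian elimination reduces $B$ to the identity, giving representative (5).

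The principal obstacle, and the step that most closely duplicates \cite{cubicspaper}, is showing that the three rank-$3$ normal forms (2), (3), (4) are pairwise inequivalent. I would distinguish them by two invariants of the twisted-congruence orbit of $A$: the relative position of $\ker(A)$ and the appropriately Frobenius-twisted $\ker(A^{\top})$ inside $k^4$, and the isomorphism type of the induced Frobenius form on a chosen $3$-dimensional complement. Equivalently, one may read these invariants geometrically from the singular subscheme of $V(h) \subset \PP^3$---its number of points and their tangent-cone structure---following the approach of \cite{cubicspaper}; the argument for general $q$ parallels the case $q = 2$ handled there.
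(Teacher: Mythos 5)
Your overall skeleton (case analysis by rank: one class in rank $4$ via Theorem \ref{fullrank}, one in rank $2$, three in rank $3$) matches the paper's route, which obtains the normal forms from the sparse-matrix reduction (Theorem \ref{sparse}, Corollary \ref{rank}) and the rank-$3$ inequivalences from the lemma preceding Proposition \ref{n-1} (officially the paper defers to \cite{cubicspaper}). The gap is in the step you yourself identify as the principal obstacle: the invariants you propose do not separate forms (2), (3), (4). The geometric one fails outright: all three forms have Jacobian ideal $\left<x_1^q,x_2^q,x_3^q\right>$, hence literally the same singular subscheme, a single point $[0:0:0:1]$; and in the chart $x_4=1$ each has initial form $x_1^q$, so the tangent cone there is the same $q$-fold plane. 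Thus ``number of points and their tangent-cone structure'' is identical for all three. The linear-algebra invariants are no better as stated: in each case $\ker(A)$ and the $q$-th--root twist of $\ker(A^{\top})$ are just two distinct lines in $k^4$ (their being distinct is precisely nondegeneracy here), both transform under a coordinate change $g$ by the standard action of $g^{-1}$, and $GL_4$ is transitive on pairs of distinct lines, so their ``relative position'' carries no further discrete information. Moreover ``the induced Frobenius form on a chosen $3$-dimensional complement'' is not well-defined: for form (4), restricting to the complement $x_1=0$ of $\ker(A)=\langle e_1\rangle$ gives $x_2^qx_3+x_3^qx_2$ of embedding dimension two, while restricting to the complement $x_1=\gamma x_4$ with $\gamma\neq 0$ gives a nondegenerate ternary form of rank three, so the isomorphism type depends on the choice.

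What does work, and is how the paper argues, is finer: any equivalence preserves the unique singular point, hence stabilizes the ideal $\left<x_1,x_2,x_3\right>$; comparing the terms involving $x_4$ (the only such term is $x_1^qx_4$) forces $x_1\mapsto \lambda x_1$, so an equivalence induces one between the forms modulo $x_1$, and the residual forms $x_2^{q+1}$, $x_2^qx_3$, $x_2^qx_3+x_3^qx_2$ are pairwise inequivalent (different rank or embedding dimension). Equivalently, in geometric language one must pass to the intersection with the reduced tangent plane $x_1=0$ at the singular point, which is a $(q+1)$-fold line for (2), a $q$-fold line plus a reduced line for (3), and $q+1$ distinct lines for (4). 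Separately, your reduction of the rank-$3$ case to ``exactly the three normal forms'' is asserted rather than carried out; in the paper this is the content of Theorem \ref{sparse} together with the count $\binom{3}{1}=3$ from Corollary \ref{rank}, and it is a substantive half of the work rather than a routine clearing of entries.
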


\begin{theorem} \label{degenerate-cubics: T} 
There are three projective equivalence classes  of non-degenerate Frobenius forms  in three variables in each fixed degree, represented by precisely one of the following forms:
\begin{enumerate}
 \item \,   The  diagonal form $x_1^{q+1} + x_2^{q+1} + x_3^{q+1}$ \, (rank 3)
 \item \, The cuspidal form  $x_1^qx_3 + x_2^{q+1}$ \, (rank 2)
 \item \,  The reducible form  $x_1^q x_3 + x_2^qx_1 $ \, (rank 2)
 \end{enumerate}
 
Moreover,  in two variables,  each Frobenius form is projectively equivalently to exactly one of the following: 
\begin{enumerate}
 \item \,  The form   $x_1 x_2 (x_1^{q-1}+x_2^{q-1})$, defining $q+1$ distinct points  $0$, $\infty$ and the $(q-1)$-st roots of unity in $\mathbb P^1$. 
 \item \,  The form $x_1^q x_2$, defining the union of a $q$-fold point and a reduced point. 
 \item \,     The form $x_1^{q+1}$, defining a $(q+1)$-fold point.
\end{enumerate}
\end{theorem}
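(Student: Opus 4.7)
The plan is to stratify by the rank of the representing matrix $A$, which is invariant because the $GL_n(k)$-action is $A \mapsto (g^{[q]})^{\top} A g$.

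For three variables, rank $3$ is handled immediately by Theorem \ref{fullrank}, producing the diagonal form. For rank $1$, I will write $A = uv^{\top}$ so that $f = \left(\sum u_i x_i^q\right)\left(\sum v_j x_j\right)$; since $k$ is perfect, the first factor is a $q$-th power of a linear form, giving $f = L^q M$, which has embedding dimension at most two and is therefore degenerate.

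The substantive case is rank $2$. Using the linear dependence $\sum \lambda_i L_i = 0$ among the three rows together with the identity $x_i^q - \lambda_i x_3^q = (x_i - \lambda_i^{1/q} x_3)^q$, I will first reduce $f$ to the shape $x_1^q L_1 + x_2^q L_2$ with $L_1, L_2$ linearly independent in three variables. Non-degeneracy forces $x_3$ to appear in some $L_i$, and a second application of the same Frobenius-absorption trick lets me arrange for $x_3$ to appear only in $L_1$. A translation in $x_3$ then reduces $L_1$ to a scalar multiple of $x_3$, and after diagonal rescaling the form is
$$f = x_1^q x_3 + x_2^q(d x_1 + e x_2), \quad (d,e) \ne 0.$$
If $e = 0$, scaling $x_2$ yields the reducible form $x_1^q x_3 + x_1 x_2^q$. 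If $e \ne 0$, I first normalize $e = 1$ and then apply the triangular change of variables $y_1 = x_1$, $y_2 = d x_1 + x_2$, $y_3 = x_3 - d^{q+1} x_1 - d^q x_2$, which a direct computation shows converts $f$ into $y_1^q y_3 + y_2^{q+1}$, the cuspidal form. Inequivalence of the two rank-$2$ forms follows because the reducible form factors as $x_1(x_1^{q-1} x_3 + x_2^q)$, while the cuspidal form $x_1^q x_3 + x_2^{q+1}$ is irreducible (its dehomogenization in the chart $\{x_1 \ne 0\}$ becomes $z + y^{q+1}$), and irreducibility is a projective invariant.

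The two-variable case goes by the same rank stratification. Rank $2$ is full rank, so Theorem \ref{fullrank} gives $f \sim x_1^{q+1} + x_2^{q+1}$; the listed representative $x_1 x_2(x_1^{q-1} + x_2^{q-1})$ is also full rank, hence equivalent by the same uniqueness. Rank $1$ forms factor as $L^q M$, and splitting on whether $M$ is a scalar multiple of $L$ gives $x_1^{q+1}$ or, after a basis change sending $L, M$ to $x_1, x_2$, the form $x_1^q x_2$. Pairwise inequivalence of the three two-variable forms is clear from rank and embedding dimension. The hard part throughout is the subcase $d, e$ both nonzero in the three-variable rank-$2$ analysis: without the explicit triangular change of variables above, one might naively posit a fourth projective equivalence class, and verifying its collapse to the cuspidal form is the key computation.
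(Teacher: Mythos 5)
Your proof is correct: the rank stratification, the Frobenius-absorption reductions, and the explicit triangular substitution $y_1=x_1$, $y_2=dx_1+x_2$, $y_3=x_3-d^{q+1}x_1-d^qx_2$ all check out (expanding $y_1^qy_3+y_2^{q+1}$ gives exactly $x_1^qx_3+dx_1x_2^q+x_2^{q+1}$), and your invariants (rank, irreducibility, embedding dimension) suffice for pairwise inequivalence. Your route is, however, genuinely different from the paper's: the paper gives no ad hoc argument for this statement, deferring to \cite{cubicspaper} and, within its own framework, to the sparse-matrix machinery --- Theorem \ref{sparse} reduces any rank $2$, embedding dimension $3$ form to one of the $\binom{2}{1}=2$ sparse representatives $x_1^qx_3+x_2^{q+1}$ (type a) and $x_1^qx_3+x_2^qx_1$ (type b); the lemma used in Proposition \ref{n-1} (equivalent sparse forms of rank $n-1$ have the same type) shows these are inequivalent; and Theorem \ref{fullrank} together with Corollary \ref{rank}(1) handles rank $3$ and rules out rank $1$. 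Your hands-on normalization buys self-containedness: you never invoke Theorem \ref{sparse}, you produce all equivalences explicitly, and in particular you exhibit the collapse of the a priori extra family $x_1^qx_3+x_2^q(dx_1+x_2)$, $d\neq 0$, onto the cuspidal form, a step the sparse-form approach absorbs invisibly. The paper's machinery buys uniformity and scalability to higher embedding dimension (it is what drives the five-variable case), at the cost of importing the structural theorem. One small omission on your side: you should note, e.g.\ by the paper's lemma on sparse forms or by a direct check, that the cuspidal and reducible forms themselves have embedding dimension $3$, so that each listed representative really does lie in the class of non-degenerate forms; this is immediate but deserves a sentence.
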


\section{Classification of Frobenius Forms  in Five Variables: Proofs} \label{new}

In this section, we prove Theorem \ref{5}. 

For this, we recall the method in \cite{extremal-v1} for  showing that there are only finitely many Frobenius forms  of any  fixed degree and embedding dimension, 
up to projective equivalence. A key point is that a Frobenius form  is equivalent to one represented by a   "{\bf sparse matrix}:"

\begin{theorem}\cite{extremal-v1}\footnote{A newer version of \cite{extremal-v1} proves the finiteness of Frobenius forms of a fixed degree and embedding dimension in a more uniform way, and no longer contains this result.  } \label{sparse}
Fix any algebraically closed field of characteristic $p>0$.  A Frobenius form of embedding dimension $n$ and rank $r$ can be represented by a matrix $A$  with the following properties:
\begin{enumerate}
\item All rows beyond the $r$-th are zero.
\item  All columns beyond the $n$-th are zero. 
\item There are exactly $r$ nonzero entries (all of which are 1) occurring in positions $(1 \, j_1), (2\,  j_2), \dots, (r \, j_r)$, where $j_1>j_2 >\dots > j_r$.
\end{enumerate}
\end{theorem}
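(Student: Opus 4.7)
Plan: The plan is to iteratively simplify the representing matrix $A$ via the group action $A \mapsto (g^{[p^e]})^T A g$ of $GL_n(k)$. A key preliminary observation, underpinning the method of \cite{extremal-v1}, is that over the algebraically closed field $k$, every invertible matrix $M$ admits entry-wise $p^e$-th roots and therefore equals $(g^{[p^e]})^T$ for a unique $g \in GL_n(k)$; thus for any invertible row operation one can find $g$ realizing it (which forces a tied column operation by $g$), and symmetrically for any column operation.

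The first main reduction is to use the rank hypothesis to zero out the bottom $n-r$ rows. Since $A$ has rank $r$, one picks an invertible $U$ such that $UA$ has its bottom $n-r$ rows zero. Setting $g = (U^T)^{[1/p^e]}$, the action yields $UAg$, whose bottom rows remain zero (right-multiplication preserves zero rows). This achieves property (1), leaving the top $r \times n$ block with linearly independent rows. Property (2) holds automatically once we exploit the embedding dimension hypothesis — by an initial change of coordinates, if necessary — to ensure the form involves only the first $n$ variables.

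The second main reduction is to sparsify the top $r \times n$ block iteratively, processing rows $i = 1, 2, \ldots, r$ in order. At step $i$, I choose the largest column index $j_i$ (subject to $j_i < j_{i-1}$ when $i > 1$) for which the current $i$-th row has a nonzero entry, and apply a further group element — drawn from the subgroup of lower block triangular $g$'s that preserves both the zero bottom rows and the already-sparsified rows and columns — to normalize the $(i, j_i)$ entry to $1$ and to clear the remaining entries in row $i$ and column $j_i$. The strict inequality $j_1 > j_2 > \cdots > j_r$ is built into this greedy procedure.

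The main obstacle is verifying that the greedy procedure never stalls, i.e., that a valid pivot column $j_i$ always exists at step $i$. The embedding dimension hypothesis is the critical input here: failure at some step would leave a matrix with enough zero rows and columns that a further change of coordinates would eliminate a variable from the form, contradicting embedding dimension $n$. Combined with the algebraic closedness of $k$, which provides the flexibility to realize the required group elements at each step, and careful bookkeeping ensuring each step respects the sparsity achieved previously, this yields the desired sparse representation of $A$.
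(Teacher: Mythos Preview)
Your greedy procedure has a genuine gap: it can stall on nondegenerate forms, and the embedding-dimension argument you invoke does not rescue it. Consider the rank-$2$, embedding-dimension-$3$ form $f = x_1^q x_2 + x_2^q x_3$, with matrix
\[
A = \begin{bmatrix} 0 & 1 & 0 \\ 0 & 0 & 1 \\ 0 & 0 & 0 \end{bmatrix}.
\]
The bottom row is already zero, so your first reduction is complete. At step~$1$ your rule sets $j_1 = 2$ (the rightmost nonzero entry of row~$1$); there is nothing further to clear. At step~$2$ you require $j_2 < 2$, so $j_2 = 1$, but the $(2,1)$ entry is zero and the procedure stalls. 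Yet $f$ is nondegenerate (swapping $x_1 \leftrightarrow x_2$ gives $x_1^q x_3 + x_2^q x_1$, the reducible form of Theorem~\ref{degenerate-cubics: T}). Your proposed contradiction---that stalling forces enough zero rows and columns to eliminate a variable---fails here: column~$1$ is zero and row~$3$ is zero, but $x_1$ still appears (through $x_1^q$) and $x_3$ still appears (linearly), so no variable can be removed.

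The underlying problem is that ``rightmost nonzero entry of row~$i$'' need not decrease with $i$; you must be allowed to reorder rows (or, equivalently, select the pivot column first among \emph{all} remaining rows) before committing to $j_i$. But row permutations under the action $(g^{[p^e]})^\top A g$ force simultaneous column permutations, which can disturb columns you have not yet processed, so the bookkeeping is more delicate than your plan suggests. The paper does not prove this theorem directly (it is cited from \cite{extremal-v1}), but the discussion following Definition~\ref{def} indicates that the argument there is \emph{inductive}: one first arranges $h = x_1^q x_n + (\text{lower terms})$, splits into the two cases ``type~a'' and ``type~b'' according to whether $x_1$ appears linearly, and then applies the inductive hypothesis to the inner $(n-2)\times(n-2)$ block $B$. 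That recursive structure sidesteps the stalling issue entirely.
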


In particular, we may assume that all columns of $A$ are zero but for $r$ of them, which are the standard unit basis vectors $e_r, \dots, e_1$ (in that order, and possibly interspersed with zero columns).  For example,  any  full rank Frobenius form is projectively equivalent to a Frobenius form whose matrix is the anti-diagonal matrix, that is,  whose columns are $e_n,\dots, e_1$.  A Frobenius form whose matrix satisfies the three conditions of Theorem \ref{sparse} will be called a {\bf sparse form.}

Theorem \ref{sparse}  implies the following bounds:

\begin{corollary}\label{rank}{} \

\begin{enumerate} 
\item
The rank of a Frobenius form of embedding dimension $n$ is at least $\frac{n}{2}$.
\item The number of  non-degenerate $n\times n$ matrices of rank $r$ satisfying the three conditions in  Theorem (\ref{sparse}) 
is $\binom{r}{n-r}$. \item  The number of projective equivalence types of Frobenius forms of  rank $r$ and embedding dimension $n$  (and fixed degree)  is at most  $\binom{r}{n-r}$.
\end{enumerate}

\end{corollary}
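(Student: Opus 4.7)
The plan is to read all three statements off the sparse matrix representation guaranteed by Theorem \ref{sparse}. After invoking that theorem, I may assume the Frobenius form is
\[
h = \sum_{i=1}^{r} x_i^{p^e}\, x_{j_i},
\]
where $j_1 > j_2 > \cdots > j_r$ is a strictly decreasing sequence of indices in $\{1,\ldots,n\}$.

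For part (1), I would observe that the variables actually appearing in $h$ are exactly those indexed by $\{1,\ldots,r\} \cup \{j_1,\ldots,j_r\}$. Since the form has embedding dimension $n$, no variable can be absent, so this union must equal $\{1,\ldots,n\}$. Each of the two sets has exactly $r$ elements, so their union contains at most $2r$ elements, which forces $n \le 2r$.

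For part (2), I would translate the non-degeneracy condition into a combinatorial requirement on the index sequence. Since $\{1,\ldots,r\}$ is already covered, the constraint becomes $\{j_1,\ldots,j_r\} \supseteq \{r+1,\ldots,n\}$. Thus $n-r$ of the $j_i$ are forced to be the elements of $\{r+1,\ldots,n\}$, while the remaining $r-(n-r) = 2r-n$ values of the $j_i$ must be chosen from $\{1,\ldots,r\}$. Because the sequence is strictly decreasing, a choice of subset determines the sequence uniquely, giving $\binom{r}{2r-n} = \binom{r}{n-r}$ admissible matrices.

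Part (3) is immediate from Theorem \ref{sparse}: every projective equivalence class of Frobenius forms of rank $r$ and embedding dimension $n$ contains at least one sparse representative, so the number of classes is bounded above by the number of sparse matrices counted in part (2), namely $\binom{r}{n-r}$. The whole argument is combinatorial once Theorem \ref{sparse} is in hand; the only subtlety is recognizing that at the level of a sparse matrix, non-degeneracy is equivalent to the simple set-theoretic condition that $\{1,\ldots,r\} \cup \{j_1,\ldots,j_r\} = \{1,\ldots,n\}$, and this is the step I would take care to justify explicitly.
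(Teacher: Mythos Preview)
Your argument is correct and follows the same route as the paper: both invoke Theorem~\ref{sparse} to pass to a sparse representative $\sum_{i=1}^{r} x_i^{q} x_{j_i}$, then read off (1) from the fact that $\{1,\dots,r\}\cup\{j_1,\dots,j_r\}$ must exhaust $\{1,\dots,n\}$, and (2)--(3) by counting strictly decreasing sequences $j_1>\cdots>j_r$ containing $\{r+1,\dots,n\}$. The one point you flag as a subtlety---that for a sparse form, ``all $n$ variables appear'' is \emph{equivalent} to ``embedding dimension $n$''---is exactly right, and the paper isolates the nontrivial direction of this equivalence as a separate Lemma immediately after the corollary; you would need that direction to get the exact count in (2), while (1) and the upper bound in (3) require only the trivial direction.
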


\begin{proof}
Choosing a sparse matrix to represent the Frobenius form, we can assume it looks like
$$
 x_1^ {p^e}L_1+ \cdots +  x_r^{p^e}L_r,
$$
where $L_1, \ldots, L_r \in \{x_1, \ldots, x_n\}$ are {\bf variables}  that appear in reverse order, each variable appearing at most once.
All  the  variables $x_{r+1}, \ldots, x_n$ must appear in the list $L_1, \ldots, L_r$ (otherwise the embedding dimension is less than $n$). In particular, 
$n-r\leq  r$.  This proves (1).

For (2), we continue by  observing that conditions (1) and (3) of sparseness (Theorem \ref{sparse}) force
$$
L_1=x_n,\,  L_2=x_{n-1}, \,  \ldots, \, L_{n-r}=x_{r+1}.
$$
 For   the  remaining linear forms $\{L_{n-r+1}, \ldots, L_r\}$, we can choose $2r-n$ variables out of  the  remaining variables  $\{x_1, \ldots, x_r\}$. There are 
$$
\binom{r}{2r-n} = \binom{r}{n-r}
$$
such
choices. So (3) follows as well.
\end{proof}

\medskip
It is easy to determine  the embedding dimension of a sparse Frobenius form: 

\begin{lemma} A rank $r$ Frobenius form of the type 
\begin{equation}\label{FFFF}
x_1^qx_{j_1} + x_2^qx_{j_2} + \dots + x_r^qx_{j_r} 
\end{equation}
 has embedding dimension equal to the number of distinct variables  appearing  in the expression (\ref{FFFF}).
\end{lemma}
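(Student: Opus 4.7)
The plan is to establish matching bounds on the embedding dimension of
\[
f = x_1^q x_{j_1} + x_2^q x_{j_2} + \cdots + x_r^q x_{j_r}.
\]
Set $S_1 = \{1, 2, \ldots, r\}$ and $S_2 = \{j_1, j_2, \ldots, j_r\}$; the rank-$r$ hypothesis forces the $j_i$'s to be distinct, so $|S_2| = r$. The number of distinct variables appearing in $f$ is $d = |S_1 \cup S_2|$, and the upper bound on the embedding dimension (that it is at most $d$) is immediate since $f$ already lies in $k[x_i : i \in S_1 \cup S_2]$.

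For the lower bound, suppose some $g \in GL_n(k)$ rewrites $f$ as a form $\widetilde{f} \in k[y_1, \ldots, y_m]$. By the transformation rule from Section \ref{linear-algebra}, the matrix of $f$ becomes $A' = (g^{[q]})^\top A g$, and the condition that $\widetilde{f}$ involves only $y_1, \ldots, y_m$ translates to $A'$ having all entries outside its top-left $m \times m$ block equal to zero---equivalently, its last $n - m$ rows and last $n - m$ columns all vanish.

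These two vanishing requirements unpack, via invertibility of $g$ and $g^{[q]}$, into separate constraints on the last $n - m$ columns of $g$: the column condition forces those columns themselves to lie in $\ker A$, while the row condition forces their entry-wise $q$-th powers to lie in $\ker A^\top$. For the sparse matrix $A$, one computes directly that $\ker A = \mathrm{span}\{e_k : k \notin S_2\}$ and $\ker A^\top = \mathrm{span}\{e_k : k \notin S_1\}$. The crucial observation is then that the entry-wise $q$-th power map $v \mapsto v^{[q]}$ preserves the support of a vector over a field, so the two constraints combine into the single requirement that the chosen $n - m$ columns of $g$ vanish at every index in $S_1 \cup S_2$. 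These columns thus lie in a subspace of dimension $n - d$, and their linear independence (since $g$ is invertible) forces $n - m \leq n - d$, giving $m \geq d$, as required.
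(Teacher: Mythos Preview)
Your proof is correct and takes a genuinely different route from the paper's. The paper argues by contradiction: assuming $f$ can be written in $n-1$ linear forms $y_1, \ldots, y_{n-1}$, it first performs a row-reduction so that each $y_i$ is a binomial $x_{\ell_i} + a_i x_j$ for one fixed index $j$, and then splits into the cases $j \leq r$ and $j > r$; in each case a direct comparison of coefficients in the identity $f = \sum y_i^q L_i$ contradicts either the rank hypothesis or the fact that $x_n$ actually appears in $f$.

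Your approach avoids this case analysis entirely by exploiting the matrix factorization. The two kernel computations $\ker A = \mathrm{span}\{e_k : k \notin S_2\}$ and $\ker A^\top = \mathrm{span}\{e_k : k \notin S_1\}$ isolate exactly the roles of the ``linear'' indices and the ``$q$-th power'' indices, and the observation that $v \mapsto v^{[q]}$ preserves supports is what lets you fuse the two constraints into a single support condition on the last $n-m$ columns of $g$. This is cleaner and makes transparent why it is precisely $|S_1 \cup S_2|$ that governs the embedding dimension. The paper's argument, by contrast, is more hands-on and never explicitly invokes the uniqueness of the matrix representing a Frobenius form (which you are implicitly using when you assert that $A'$ must vanish outside its top-left $m \times m$ block); that uniqueness is, of course, stated in Section~\ref{linear-algebra}, so your appeal to it is legitimate.
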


\begin{proof} 
Suppose that a Frobenius form $f$ of the type (\ref{FFFF}) involves  $n$ variables and has rank $r$. If its embedding dimension is not $n$, then $f$ could be written as a polynomial in 
 $n-1$  independent linear forms, $y_1, \dots, y_{n-1}$. Without loss of generality, we can assume that the $y_i$ have the following  very special property: 
 there is an index $j$ such that  every $y_i$ is a {\it binomial} linear form  $x_{\ell_i} + a_{i} x_j$ for some index $\ell_i\neq j$ and scalar $a_{i}$. To see this, 
write
$$
\begin{bmatrix} y_1 \\ y_2 \\ \vdots \\ y_{n-1} \end{bmatrix} = B \begin{bmatrix} x_1 \\ x_2 \\ \vdots \\ x_n \end{bmatrix} 
$$
where $B$ is an $(n-1)\times n$ matrix of full rank, and  then left-multiply by the inverse in $GL(n-1)$ of a full rank  $(n-1)\times (n-1)$ submatrix of $B$. This replaces $\{y_1, \dots, y_{n-1}\}$ by  a set of  linear forms  spanning the same space and with the desired binomial form.

There are two cases to consider, depending on whether or not  $j \in \{1, \dots, r\}$.

If $j\leq r$, then without loss of generality $j=1,$ so that  $y_i = a_i x_1 + x_{i+1}$ for each $i=1, \dots, n-1$. Now if
$f$ is a Frobenius form in $y_1, \dots, y_{n-1}$, then 
\begin{equation}\label{aa}
 x_1^qx_{j_1} + x_2^qx_{j_2} + \dots + x_r^qx_{j_r} = y_1^qL_1 + y_2^qL_2 + \dots + y_{n-1}^qL_{n-1}
\end{equation}
for some linear forms $L_i$ in the $y_i$. Because the only terms on the right side of  (\ref{aa}) that involve $x_{r+1}^q, \dots, x_n^q$  come from $y^q_{r}, \dots, y^q_{n-1}$, respectively, it follows that 
$ L_{r} = \dots = L_{n-1} =0$. In this case, 
$$f = y_1^qL_1 + y_2^qL_2 + \dots + y_{r-1}^qL_{r-1},$$
so that $f$ has rank less than $r$, contrary to our hypothesis.

If $j>r$, then without loss of generality $j=n$, so that  $y_i =  x_i  + a_{i}x_n$ for each $i=1, \dots, n-1$. Now assuming
\begin{equation}\label{ab}
 x_1^qx_{j_1} + x_2^qx_{j_2} + \dots + x_r^qx_{j_r} = y_1^qL_1 + y_2^qL_2 + \dots + y_{n-1}^qL_{n-1}
\end{equation}
for some linear forms $L_i$ in the $y_j$,  again it follows that  $L_{r+1} = \dots = L_{n-1} =0$ by looking at the $x_i^q$ terms, with $i> r$. Moreover, since   the only terms on the right side of (\ref{ab})
 that involve $x_1^q, \dots, x_r^q$ come from $y^q_{1}, \dots, y^q_{r}$, we see that  $L_i=x_{j_i}$ for $i\leq r$. 
 But note that $x_n$ must appear among the variables $x_{j_1}, \dots, x_{j_r}$, since the original form $f$ involves all $n$ variables.  So $x_n$ is one of the $L_i$.
 This says that $x_n$ is a linear combination of $x_1+a_1x_n,  \, \dots, \, x_{n-1}+a_{n-1}x_n$, a contradiction. 
 
 Combining the two cases, we conclude that  $f$ is not a form in $y_1, \dots, y_{n-1}$, and so the embedding dimension of $f$ is $n$.
\end{proof}

\begin{remark} The total number of  projective equivalence classes of  Frobenius forms of embedding dimension $n$ is bounded  above by the $n$-th Fibonacci number  \cite[7.4]{extremal-v1}\footnote{In an updated version of \cite{extremal-v1}, the authors have recently given a precise count of the number of projective equivalence classes with a fixed embedding dimension.}. This follows from  Corollary \ref{rank} (3)  simply by adding the bounds for each relevant rank. This upper   bound  is sharp for $n \leq 4$, but not in general. 
 There are  distinct sparse matrices  that define equivalent Frobenius forms  starting in five  variables:
\end{remark}

\begin{example}\label{iso5by5}
Consider the Frobenius forms corresponding to the following matrices:
\[
\begin{bmatrix}
0 & 0&  0 & 0 & 1\\
0 & 0&  0 & 1 & 0\\
0 & 1&  0 & 0 & 0\\
0 & 0&  0 & 0 & 0\\
0 & 0&  0 & 0 & 0\\
\end{bmatrix}
\text{ and }
\begin{bmatrix}
0 & 0&  0 & 0 & 1\\
0 & 0&  0 & 1 & 0\\
1 & 0&  0 & 0 & 0\\
0 & 0&  0 & 0 & 0\\
0 & 0&  0 & 0 & 0\\
\end{bmatrix}
\]

\noindent namely, $x_1^qx_5 + x_2^qx_4 + x_3^qx_2$ and $x_1^qx_5 + x_2^qx_4 + x_3^qx_1$.  These are rank 3 nondegenerate Frobenius forms corresponding to distinct matrices satisfying the three conditions of Theorem \ref{sparse}, but they are equivalent by the change of coordinates which swaps $x_1 \leftrightarrow x_2$ and $x_4 \leftrightarrow x_5$, but fixes $x_3$.
\end{example}

The  proof of Theorem \ref{sparse} in \cite{extremal-v1}  relied on noting that a sparse $n \times n$ matrix of rank $r$ will take one of two forms
$$
\begin{bmatrix}
| &{\bf 0} & |\\
{\bf 0} & B & { e_1} \\
|& {\bf 0} & |\\
\end{bmatrix} 
\text{ or }
\begin{bmatrix}
| &{\bf 0} & |\\
{ e_r} & B & {e_1} \\
|& {\bf 0} & |\\
\end{bmatrix}
$$
where $B$ is a sparse matrix of rank $r-1$ in the first case, and a sparse matrix of rank $r-2$ in the second.  In light of this, we make the following definition.

\begin{definition}\label{def}
For $n \geq 3$, we say a sparse $n \times n$ matrix of rank $r < n$ has \emph{type a} if its first column is ${\bf 0}$, and \emph{type b} if its first column is ${e_r}$.
\end{definition}

In particular,  a Frobenius form of embedding dimension  $n$ and rank $r$ is equivalent, after a linear change of coordinates, to a polynomial of one of two forms: 
\begin{enumerate}[leftmargin=1.8cm]
 \item [({\bf type a})] $h = x_1^q x_n + h'(x_2, \ldots, x_{n-1})$
where $h'$ is a Frobenius form  of rank $r-1$ and embedding dimension $n-2$. In this case,  $B$ is non-degenerate of rank $r-1$.

\item [({\bf type b})] $h =  x_1^q x_n + x_r^q x_1 + h'(x_2, \ldots, x_{n-1})$
where $h'$ is a Frobenius form of rank $r-2$.
If $h'$ is non-degenerate in the $n-2$ variables $x_2, \dots x_{n-1}$, then $B$ is non-degenerate of rank $r-2$.  Otherwise, $h'$ must be non-degenerate in the $n-3$ variables $x_2, \ldots, \widehat{x}_r, \ldots, x_{n-1}$, so that $B$ is degenerate   of rank $r-2$. 
\end{enumerate}

\bigskip 

The proof of Theorem \ref{5}  will now follow from the following two propositions.

\begin{proposition}\label{n-1}
There are precisely $n-1$ projective equivalence classes of Frobenius forms of embedding dimension $n$ and rank $n-1$ (in fixed degree $p^e+1$).
\end{proposition}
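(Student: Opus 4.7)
The plan is to prove this in two halves. The upper bound of $n-1$ is immediate from Corollary \ref{rank}(3) applied with $r = n-1$, since $\binom{n-1}{1} = n-1$. Accordingly, Theorem \ref{sparse} reduces the problem to the $n-1$ sparse forms
\[
f_m \;=\; x_1^q x_n \,+\, \sum_{i=2}^{n-1} x_i^q x_{j_i},
\]
indexed by $m \in \{1,\ldots,n-1\}$, where $j_1 > j_2 > \cdots > j_{n-1}$ enumerate the set $\{1,\ldots,n\}\setminus\{m\}$ in decreasing order; the remaining task is to show these $n-1$ forms are pairwise projectively inequivalent.

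For this, I would attach to each rank-$(n-1)$ Frobenius form $f$ in $n$ variables the following invariant. Because the codimension of the singular locus equals the rank, $f$ has a unique singular point $P$, and the reduced tangent cone to $V(f)$ at $P$ is a canonically defined hyperplane $T$ in the $\PP^{n-2}$ of projective directions through $P$. Within $T \cong \PP^{n-3}$, the locus of directions whose corresponding line through $P$ lies entirely in $V(f)$ is cut out by a Frobenius form $\widetilde{f}$ in $n-2$ variables, canonically associated to $f$, so the projective equivalence class of $\widetilde{f}$ is an invariant of $f$ up to projective equivalence. A direct calculation shows that for each $f_m$ we have $P = [0:\cdots:0:1]$ with reduced tangent cone $V(x_1)$, and $\widetilde{f_m}(x_2,\ldots,x_{n-1}) = f_m(0, x_2, \ldots, x_{n-1}, 0)$.

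Evaluating this restriction produces three regimes. When $m=1$, the form $\widetilde{f_1} = \sum_{i=2}^{n-1} x_i^q x_{n+1-i}$ is antidiagonal, of full rank $n-2$ in embedding dimension $n-2$. When $m = n-1$, the variable $x_{n-1}$ drops out entirely and $\widetilde{f_{n-1}}$ has full rank $n-3$ in embedding dimension $n-3$. When $m \in \{2, \ldots, n-2\}$, the form $\widetilde{f_m}$ has rank $n-3$ in embedding dimension $n-2$. The pair (rank, embedding dimension) of $\widetilde{f_m}$ thus already separates these three groups from one another.

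The main obstacle is distinguishing the $n-3$ forms in the middle regime, $\{\widetilde{f_m} : m \in \{2,\ldots,n-2\}\}$, which share the same rank and embedding dimension; for this I would induct on $n$. A direct inspection shows that after the relabeling $x_{k+1}\mapsto x_k$, each $\widetilde{f_m}$ is visibly a sparse rank-$(n-3)$ form in embedding dimension $n-2$ with missing column index $m - 1 \in \{1,\ldots,n-3\}$. By the inductive hypothesis — namely, the statement of the proposition in embedding dimension $n-2$ — these $n-3$ sparse forms are pairwise projectively inequivalent, so the $\widetilde{f_m}$ are too. The induction descends to the base cases $n=2$ (only $x_1^q x_2$ occurs) and $n=3$ (covered by Theorem \ref{degenerate-cubics: T}), which are immediate.
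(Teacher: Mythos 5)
Your argument is correct, and its inductive skeleton is in fact parallel to the paper's, but the key step is justified by a genuinely different route. The paper's central lemma is proved by explicit matrix manipulation: any equivalence between two sparse rank-$(n-1)$ forms stabilizes the ideal $\langle x_1,\dots,x_{n-1}\rangle$ of the singular point, and a computation with the coefficients of $x_n$ forces $x_1\mapsto \lambda_{11}x_1$, so the equivalence descends modulo $x_1$; hence the inner $(n-2)$-variable form (your $\widetilde{f}$, the paper's ``$B$ matrix'') is an invariant. The paper then counts via the recursion $N(n,n-1)=N(n-2,n-2)+N(n-2,n-3)+N(n-3,n-3)=N(n-2,n-3)+2$. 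Your three regimes for $(\operatorname{rank},\text{embedding dimension})$ of $\widetilde{f_m}$ --- namely $(n-2,n-2)$, $(n-3,n-2)$, $(n-3,n-3)$ --- are exactly the paper's trichotomy (type a; type b with nondegenerate $B$; type b with degenerate $B$), and both proofs recurse with step two on the middle case. What your version buys is an intrinsic description of the invariant: $\widetilde{f}$ is the restriction of $f$ to the reduced tangent cone at the unique singular point (equivalently, the equation of the cone of lines through that point), so its invariance under projective equivalence is automatic rather than the outcome of a coefficient computation. The paper's version is more elementary and self-contained.

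Two small points to tighten, neither fatal. First, you should extract $\widetilde{f}$ from the scheme structure --- most cleanly, define $\widetilde{f}:=f\bmod x_1$, the restriction of $f$ to the reduced tangent cone, which for the sparse forms coincides with $f(0,x_2,\dots,x_{n-1},0)$ --- rather than from the underlying set of the locus of lines: a possibly non-reduced degree-$(q+1)$ hypersurface is not determined up to projective equivalence by its support, so ``the locus is cut out by $\widetilde{f}$'' needs the Fano-scheme (ideal-theoretic) reading to make the class of $\widetilde{f}$ well defined. Second, ``codimension $n-1$'' only gives a zero-dimensional singular locus; uniqueness of $P$ follows because the partial derivatives of a Frobenius form are $q$-th powers of linear forms, so the singular locus is a linear subspace --- or simply by inspection for the sparse representatives, which is all the argument requires.
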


The proof of Proposition \ref{n-1} uses the following lemma:

\begin{lemma}
Let $h_1$ and $h_2$ be two Frobenius forms of the same degree, both of embedding dimension  $n$ and rank $r= n-1$, and both  represented by matrices (say $A_1$ and $A_2$)  in sparse form.  Then $h_1$ and $h_2$ are projectively equivalent if and only if $A_1$ and $A_2$  are the same type,  and their $B$ matrices are projectively equivalent. \end{lemma}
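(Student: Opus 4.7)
The natural approach is to analyze the matrix equation $A_2 = (g^{[q]})^\top A_1 g$ for $g \in GL_n(k)$, exploiting the common block structure of $A_1$ and $A_2$. Because the rank is $n-1$, both matrices have zero last row, and the $x_1^q x_n$ summand present in both $h_i$ (uniquely occupying column $n$ in the sparse form) forces both $A_i$ to have last column equal to $e_1$. Thus each may be written
\[
A_i \;=\; \begin{pmatrix} 0 & \mathbf{0}^\top & 1 \\ c_i & B_i & \mathbf{0} \\ 0 & \mathbf{0}^\top & 0 \end{pmatrix},
\]
with block sizes $1$, $n-2$, $1$, where $c_i = \mathbf{0}$ in type a and $c_i = e_{n-2}$ in type b.

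For the forward direction, I would first deduce from $A_1 e_n = A_2 e_n = e_1$ that the last column of $g$ must be a scalar multiple of $e_n$ and the first row of $g$ must be a scalar multiple of $e_1^\top$, with scalars $\alpha := g_{11}$ and $\beta := g_{nn}$ tied by $\alpha^q \beta = 1$. Therefore $g$ admits the compatible block decomposition
\[
g \;=\; \begin{pmatrix} \alpha & \mathbf{0}^\top & 0 \\ u & G & \mathbf{0} \\ w & v^\top & \beta \end{pmatrix}
\]
for some column $u$, row $v^\top$, scalar $w$, and $G \in GL_{n-2}(k)$. Expanding $(g^{[q]})^\top A_1 g$ blockwise and matching to $A_2$, the $(2,2)$-block yields $(G^{[q]})^\top B_1 G = B_2$, so the $B$-matrices are projectively equivalent. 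That $A_1$ and $A_2$ must be of the same type then follows from a rank comparison: $\operatorname{rk} B_i$ equals $n-2$ in type a but $n-3$ in type b, and rank is preserved by this equation.

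For the reverse direction in type a, given $G$ realizing $(G^{[q]})^\top B_1 G = B_2$, the block-diagonal choice $g = \operatorname{diag}(1, G, 1)$ satisfies all block equations directly (the vanishing $c_i$ kill the cross terms), providing an immediate witness of $h_1 \sim h_2$. In type b the $(2,1)$-block equation becomes
\[
(G^{[q]})^\top (\alpha e_{n-2} + B_1 u) = e_{n-2},
\]
and since $\operatorname{im} B_1 = \{z : z_{n-2} = 0\}$ (because $B_1$ has zero last row under the sparseness constraints in this rank), this forces $\alpha = \bigl((G^{-1})_{n-2,n-2}\bigr)^{q}$; provided this is nonzero, $u$ is determined up to $\ker B_1$, and $v, w, \beta$ follow from the remaining block equations.

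The main obstacle is the reverse direction in type b, where we must ensure $(G^{-1})_{n-2,n-2} \neq 0$ for some $G$ in the equivalence coset. I would close this by using the sparseness hypothesis directly: the sparse representatives of rank $n-3$ Frobenius forms in at most $n-2$ variables biject with their projective equivalence classes (which can be verified inductively by matching the combinatorial count in this rank range to the number of equivalence classes), so $B_1 \sim B_2$ with both sparse forces $B_1 = B_2$, reducing the reverse direction to the trivial case $G = I$, for which $(G^{-1})_{n-2,n-2} = 1 \neq 0$. A fallback genericity argument, using that the bad locus is Zariski-closed and that the identity lies in $\operatorname{Stab}(B_1)$, should also suffice.
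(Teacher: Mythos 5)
Your forward direction is correct and is essentially the paper's argument recast in matrix language: the paper forces the block-triangular shape of $g$ by noting that the singular locus $V(x_1,\dots,x_{n-1})$ must be preserved and then matching coefficients of the monomials involving $x_n$, which is exactly your kernel/block computation, and the rank comparison ruling out an equivalence between type a and type b is also the paper's. One small slip there: the claim that the last column of $g$ is a multiple of $e_n$ does not follow from $A_1e_n=A_2e_n=e_1$; it follows from the zero last rows, i.e.\ from the left kernels of the $A_i$ being spanned by $e_n$ (after which the condition $A_ie_n=e_1$ does force the first row of $g$ to be proportional to $e_1^\top$). The conclusion is correct, so this is only a misattributed justification.

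The genuine gap is in your resolution of the type b converse. The statement you invoke --- that sparse representatives of rank $n-3$ forms in at most $n-2$ variables biject with projective equivalence classes --- is, in the corank-one case, precisely Proposition \ref{n-1} at embedding dimension $n-2$, and in the paper that proposition is \emph{deduced from this lemma}; ``matching the combinatorial count to the number of equivalence classes'' presupposes that distinct sparse matrices in that range are pairwise inequivalent, which is the very thing at stake. As written the argument is circular; it can be repaired, but only by explicitly organizing a simultaneous induction in which the lemma at embedding dimension $n$ may use the classification at dimension at most $n-2$ --- you gesture at induction but never set it up. (Note also that the unrestricted version of your claim is false, by Example \ref{iso5by5}, and that once you know $B_1=B_2$ you have $A_1=A_2$ outright, so the entire $(G^{-1})_{n-2,n-2}$ analysis becomes moot.) Your fallback does not close the gap either: the set of $G$ realizing $B_1\sim B_2$ is a coset $\operatorname{Aut}(B_1)G_0$, and knowing that the bad locus $\{(G^{-1})_{n-2,n-2}=0\}$ is Zariski-closed and that $I\in\operatorname{Aut}(B_1)$ says nothing about whether that coset meets its complement; a priori it could lie entirely inside the bad hypersurface. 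For comparison, the paper simply declares the converse obvious (for type a it is, via $g=\operatorname{diag}(1,G,1)$); your identification of the type b obstruction is a worthwhile observation the paper glosses over, but your proposed ways of discharging it do not yet constitute a proof.
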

\begin{proof}
Without loss of generality, by Theorem \ref{sparse} we may assume
\begin{align*}
 h_1 &= x_1^qx_n + x_2^qx_{j_2} + x_3^qx_{j_3} + \dots + x_{n-1}^qx_{j_{n-1}}, \text{ and } \\
h_2 &= x_1^qx_n + x_2^qx_{j_2'} + x_3^qx_{j_3'} + \dots + x_{n-1}^qx_{j_{n-1}'}
\end{align*}
where $x_{j_i}, x_{j_i'} \in \{x_1,\dots, x_{n-1}\}$ for each $i$.  In particular, we note that the only term containing $x_n$ in both $h_1$ and $h_2$ is $x_1^qx_n$.  Now suppose some change of coordinates $\phi$ sends $h_1$ to $h_2$.  The singular locus of both $h_1$ and $h_2$ is defined by the vanishing of  $x_1,   x_2, \dots, x_{n-1}$, so  the ideal $\left<x_1, \dots, x_{n-1}\right>$ is stable under  $\phi$.  In particular,  $\phi$  must send  $x_n $ to a linear form involving $x_n$. Supposing that  $\phi$ maps
\begin{align*}
 x_1 &\mapsto \lambda_{11}x_1 + \dots + \lambda_{1,n-1}x_{n-1}, \text{ and } \\
x_n &\mapsto \lambda_{n1}x_1 + \dots + \lambda_{nn}x_n\,\,\,\,\,\,{\text{with}} \,\,\lambda_{nn}\neq 0,
\end{align*}
then $\phi$ sends  $h_1$ to a polynomial
\begin{align*}
 &\lambda_{11}^q\lambda_{nn}x_1^qx_n  + \dots + \lambda_{1,n-1}^q\lambda_{nn}x_{n-1}^q x_n +  {\text{terms that do not involve }} x_n.
\end{align*}
In order for this to be equal to $h_2$, we must have  $\lambda_{1i} = 0$ for all $i > 1$. So $\phi$ sends
$$
x_1 \mapsto \lambda_{11}x_1
$$
where $\lambda_{11} \neq 0$.  In particular, if $h_1$ and $h_2$ are equivalent, they must be equivalent after modding out by $x_1$.

We can now see that if $h_1$ and $h_2$ are equivalent, then   $A_1$ and $A_2$ have the same type. 
Letting $\sim$ denote equivalence up to change of coordinates,  if 
$$
\begin{bmatrix}
0 & \bf 0 & 1 \\
\bf 0 & B &  \bf 0 \\
0 & \bf 0 &  0
\end{bmatrix} 
\sim 
\begin{bmatrix}
0 & \bf 0 & 1 \\
  e_{r-1} & B' &\bf 0 \\
 0 &  \bf  0 &  0
\end{bmatrix} 
$$
then $$\begin{bmatrix} B  & \bf 0 \\ \bf  0& 0\end{bmatrix} \sim \begin{bmatrix} B'  & \bf 0 \\  \bf 0 & 0\end{bmatrix}$$ which is the same as saying $B \sim B'$.  By rank considerations this never happens between type a and type b (see the discussion following Definition \ref{def}). 
Thus the sparse matrices of projectively equivalent Frobenius forms whose  rank is one less than the embedding dimension must have the same type. Furthermore, the argument above also shows that their "B" matrices are projectively equivalent.

Because the converse is obvious, the  lemma is proved. \end{proof}

\begin{proof}[Proof of Proposition \ref{n-1}]  Fix $q$. 
Let $N(n, r)$ denote the number of projective equivalence classes of Frobenius forms of degree $q+1,$ rank $r$, and embedding dimension $n$. 
We want to show that  $N(n,n-1) = n-1$. We will induce on $n$.

One readily verifies  that  
$$
N(1,0) = 0,  N(2,1) = 1, \text{ and } N(3,2) = 2.
$$
Since  type a and type b matrices yield distinct classes for $r = n-1$, the number of classes $N(n,n-1)$ is equal to the number of classes of type a plus the number of type b. 
The discussion  of the types following Definition  \ref{def} informs us, therefore, that for $n\geq 4$
$$
N(n,n-1) = N(n-2,n-2) + N(n-2,n-3) + N(n-3,n-3).
$$ 
Recalling that there is only one full rank form in each degree and dimension (Theorem \ref{fullrank}), it follows that
$$
N(n,n-1) = 1 + N(n-2,n-3)  + 1 = N(n-2,n-3)  + 2.
$$
Finally, by induction on $n$, 
$$
N(n,n-1) =  (n-3)  + 2 = n-1,
$$
as desired.
\end{proof}

\medskip

In light of Proposition \ref{n-1},  we have now fully classified all Frobenius forms in five variables of ranks four and five. 
 By Corollary \ref{rank} (1),  it only remains to analyze the rank three case. Using Corollary \ref{rank} (2), we see that there are $\binom{3}{2} = 3$ sparse forms of rank three in five variables, and we have seen in Example \ref{iso5by5} that two of them are projectively equivalent.  To complete the classification, it remains to see that the third sparse matrix produces a Frobenius form not equivalent to these. This is accomplished by the following: 

\begin{proposition}
The following rank three Frobenius forms in five variables are not projectively equivalent:
$$
f = x_1^qx_5 + x_2^qx_4 + x_3^{q+1}
$$
and 
$$
g = x_1^qx_5 + x_2^qx_4 + x_3^qx_2.
$$
\end{proposition}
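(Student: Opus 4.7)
The plan is to identify a projective invariant that distinguishes $V(f)$ from $V(g)$. Both hypersurfaces have the same singular locus, namely the projective line $P = V(x_1, x_2, x_3) \subset \mathbb{P}^4$, since the partial derivatives of both $f$ and $g$ (computed in characteristic $p$, so all the $q$-th powers differentiate to zero) generate the same ideal $(x_1^q, x_2^q, x_3^q)$. Consequently, any projective equivalence $V(f) \to V(g)$ must carry $P$ to itself. The invariant I will exploit is the existence of a linear $\mathbb{P}^2$ contained in the hypersurface and also containing $P$; this is clearly preserved by any coordinate change that preserves $P$.

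First, $V(g)$ contains such a $\mathbb{P}^2$: the plane $L_0 = V(x_1, x_2)$ visibly contains $P$, and every term of $g = x_1^q x_5 + x_2^q x_4 + x_3^q x_2$ involves $x_1$ or $x_2$, so $g$ vanishes identically on $L_0$.

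Next, I would show that $V(f)$ contains no $\mathbb{P}^2$ through $P$. Any such plane has the form $L = V(\ell_1, \ell_2)$ with $\ell_1, \ell_2$ two linearly independent forms in $\mathrm{span}(x_1, x_2, x_3)$, since $L \supset P$ forces the defining forms of $L$ to lie in the ideal of $P$. A change of basis within the three-dimensional space $\mathrm{span}(x_1, x_2, x_3)$ reduces the argument to two cases. If $x_3 \notin \mathrm{span}(\ell_1, \ell_2)$, I normalize to $\ell_1 = x_1 - \alpha x_3$ and $\ell_2 = x_2 - \beta x_3$, giving
\[
f|_L \;=\; \alpha^q x_3^q x_5 + \beta^q x_3^q x_4 + x_3^{q+1} \;=\; x_3^q(\alpha^q x_5 + \beta^q x_4 + x_3),
\]
which is nonzero in $k[x_3, x_4, x_5]$ since the linear factor has $x_3$-coefficient equal to $1$. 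If instead $x_3 \in \mathrm{span}(\ell_1, \ell_2)$, I take $\ell_1 = x_3$ and $\ell_2 = a x_1 + b x_2$ with $(a, b) \neq 0$; then $f|_L = x_1^q x_5 + x_2^q x_4$ subject to $ax_1 + bx_2 = 0$, and a one-line check in each of the subcases $a = 0$ and $a \neq 0$ shows $f|_L$ is a nonzero monomial times a nonvanishing linear form. Either way $L \not\subset V(f)$, contradicting the existence of a projective equivalence between $V(f)$ and $V(g)$.

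The only real obstacle is identifying the correct invariant. Direct comparison of the two defining equations, or of their Jacobian ideals (which coincide), gives no information, so one has to think geometrically; but once one hits upon the idea of looking for linear planes in the hypersurface through the singular locus, the verification is an elementary change-of-basis computation in a three-dimensional space.
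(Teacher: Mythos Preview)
Your proof is correct and rests on the same invariant as the paper's: the paper observes that $g \in \langle x_1, x_2 \rangle$ and then shows $f \notin \langle L_1, L_2 \rangle$ for any two linear forms $L_1, L_2$ in $x_1, x_2, x_3$, which is exactly the algebraic dual of your geometric statement that $V(g)$ contains a $\mathbb{P}^2$ through the singular line $P$ while $V(f)$ does not. The execution differs in minor ways. Where you dispose of the case $x_3 \notin \mathrm{span}(\ell_1,\ell_2)$ by direct substitution, the paper instead argues via radicals that $x_3$ is forced to lie in $\langle L_1, L_2 \rangle$ whenever $f$ does; and in the remaining case the paper appeals to the irreducibility of $x_1^q x_5 + x_2^q x_4$ (Eisenstein) rather than your explicit two-subcase check. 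Your route is marginally more elementary and self-contained; the paper's is marginally slicker in collapsing one case.
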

\begin{proof}
To see this, note that $g \in \left<x_1,x_2\right>$.  Thus, if $f$ and $g$ are projectively equivalent, there must be some linear forms $L_1$ and $L_2$ such that $f \in \left<L_1,L_2\right>$.  Since any projective change of coordinates must respect the singular locus, any form in $x_1, x_2, x_3$ must be sent to another form in $x_1, x_2, x_3$, as 
the vanishing of these coordinates defines the singular set of both hypersurfaces. This implies that   $L_1,L_2$, being the  images of $x_1$ and $x_2$ under our linear change of coordinates, are forms in $x_1, x_2, x_3$.

 Now note that $\left< x_3^{q+1},x_4,x_5 \right>  = \left< f,x_4,x_5 \right> \subset \left<L_1,L_2, x_4,x_5\right>$.  In particular, $x_3 \in \sqrt{\left<L_1,L_2,x_4,x_5\right>} = \left<L_1,L_2,x_4,x_5\right>, 
$ so that $x_3 \in {\left<L_1,L_2\right>}$.  Without loss of generality, we may assume ${\left<L_1,L_2\right>} =  {\left<x_3,L_2\right>}$ 
where  $L_2$ is a linear form in $x_1$ and $x_2$, so that  $f \in \left<x_3,L_2\right>.$ Therefore,
\begin{equation}\label{AV}
x_1^qx_5 + x_2^qx_4 \in {\left<x_3,L_2\right>}
\end{equation}
as well.
Considering the image of the  expression (\ref{AV}) under the natural quotient map
 $k[x_1, \dots, x_5] \rightarrow k[x_1, \dots, x_5]/\langle x_3\rangle$, we see that
\begin{equation}\label{temp2}
x_1^qx_5 + x_2^qx_4  \in   {\left<L_2\right>} 
\end{equation} 
in a polynomial ring in four variables.   But  the polynomial $x_1^qx_5 + x_2^qx_4 $ is irreducible (for example, by Eisenstein's criterion), so we  arrive at a  contradiction. This contradiction ensures that $f,g$ are  not  projectively equivalent.
\end{proof}

This completes our classification of Frobenius forms in up to five variables.
{\small
\bibliographystyle{amsalpha}
\bibliography{bibdatabase}
}

\end{document}